\documentclass{article}
\usepackage{amssymb,amsmath,amsthm}
\usepackage{graphicx,cite}

\textheight 8.5in
\textwidth 6.5 in
\oddsidemargin 0in
\topmargin 0in

\def\qed{\hfill {\hbox{${\vcenter{\vbox{               
   \hrule height 0.4pt\hbox{\vrule width 0.4pt height 6pt
   \kern5pt\vrule width 0.4pt}\hrule height 0.4pt}}}$}}}

\def\otr{\ \overline{\triangleright}\ }
\def\utr{\ \underline{\triangleright}\ }

\newtheorem{theorem}{Theorem}
\newtheorem{lemma}[theorem]{Lemma}
\newtheorem{proposition}[theorem]{Proposition}
\newtheorem{corollary}[theorem]{Corollary}

\theoremstyle{definition}
\newtheorem{definition}{Definition}
\newtheorem{example}{Example}

\date{}

\title{\Large \textbf{Symmetric Enhancements of Involutory Virtual Birack 
Counting Invariants}}

\author{
Melinda Ho\footnote{Email: \texttt{melinda.y.t.ho@gmail.com}}
\and 
Sam Nelson\footnote{Email: \texttt{knots@esotericka.org}. Partially supported by Simons Foundation collaboration grant 316709}
}

\begin{document}
\maketitle

\begin{abstract}
We consider involutory virtual biracks with good involutions, also known as 
\textit{symmetric involutory virtual biracks}. Any good involution on an 
involutory virtual birack defines an enhancement of the counting invariant. 
We provide examples demonstrating that the enhancement is stronger than the 
unenhanced counting invariant.
\end{abstract}

\textsc{Keywords:} Virtual bikei, involutory virtual biracks, good involutions, virtual knots, enhancements

\textsc{2000 MSC:} 57M27, 57M25

\section{Introduction} 

\textit{Framed virtual knots} are equivalence classes of oriented knot diagrams with classical and virtual crossings under the equivalence relation generated 
by the blackboard framed virtual Reidemeister moves. They can be identified
with knotted solid tori whose ambient spaces are thickened orientable compact 
surfaces $\Sigma\times [0,1]$ up to stabilization moves on $\Sigma$.

\textit{Symmetric quandles,} also known as \textit{quandles with good 
involutions}, are algebraic structures related to unoriented 
knots in non-orientable thickened surfaces \cite{KO}. Symmetric quandles can 
be understood as quandles with extra structure given by a \textit{good 
involution}, an involutory map satisfying certain identities motivated by 
knot diagrams.

\textit{Biracks}, introduced in \cite{FRS0}, are algebraic structures 
related to framed oriented knots and links.
\textit{Virtual biracks} are algebraic structures generalizing \textit{virtual 
biquandles} (see \cite{KM}) with axioms motivated by the framed virtual 
Reidemeister moves. \textit{Involutory biquandles}, useful for defining
invariants of unoriented knots and links, were considered in \cite{AN}.
Each finite example of these algebraic structures defines a computable
integer-valued invariant of the related type of knots and links. In particular,
in \cite{N-BR} finite biracks are used to define an invariant of unframed knots
which reduces to the biquandle counting invariant when the birack in question 
is a biquandle.
 
In this paper we apply the symmetric quandle idea to the cases of virtual 
biracks to enhance the birack counting invariant. An \textit{enhancement} 
of an invariant $\Phi$ is another invariant $\Psi$ which determines $\Phi$
but may contain more information; for example, the quandle cocycle invariants
defined in \cite{CJKLS} are enhancements of the quandle counting invariant. 
For any finite involutory virtual birack $X$, each good involution on $X$ 
determines an enhancement of the counting invariant by partitioning the set of 
labelings of a knot or link into disjoint subsets similar to the 
birack homomorphism enhancements defined in \cite{NW}.

The paper is organized as follows. In Section \ref{V} we review unoriented
virtual knots and involutory virtual biracks. In Section \ref{SVB} we define 
good involutions of involutory virtual biracks and give some examples. In 
Section \ref{SE} we use good involutions to enhance the involutory
virtual birack counting invariant and compute some examples.
We conclude with some questions for future research in Section \ref{Q}.

\section{Virtual Knots and Involutory Virtual Biracks}\label{V}

\textit{Framed virtual knots} are equivalence classes of unoriented knot 
diagrams including \textit{classical crossings} and \textit{virtual crossings}
\[\includegraphics{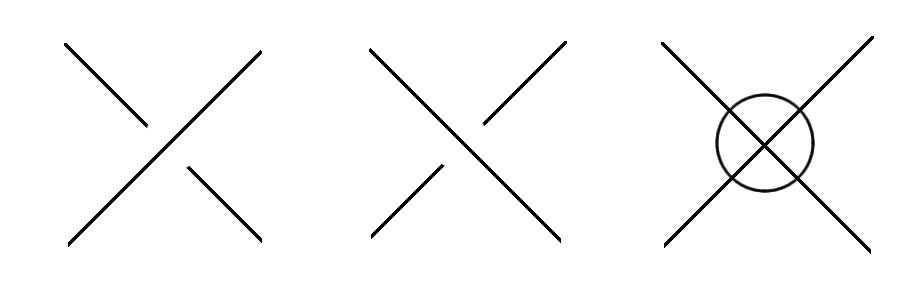}\]
under the equivalence relation generated by the \textit{framed virtual 
Reidemeister moves}:
\[\includegraphics{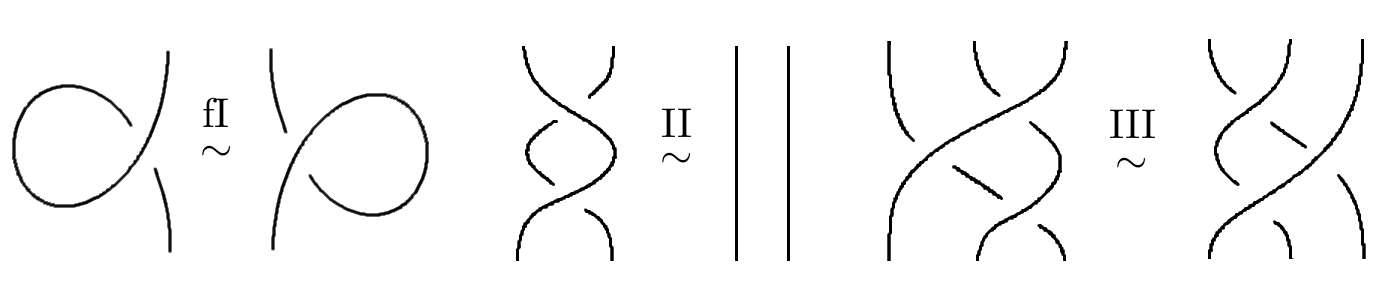}\]
\[\includegraphics{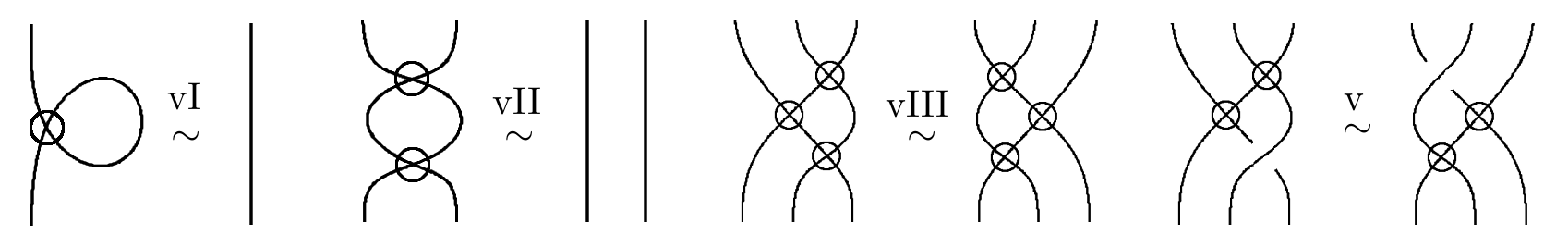}\]

Framed unoriented virtual knots can be understood as ambient isotopy classes of 
knotted solid tori in oriented thickened surfaces 
$\Sigma\times[0,1]$ up to stabilization of $\Sigma$. Classical crossings then 
represent points where the knotted solid tori cross within 
$\Sigma\times [0,1]$ while the virtual crossings are artifacts of projecting 
$\Sigma$ onto the plane of the paper.
See \cite{CKS,KK,K} for more about virtual knots.

\begin{definition}
Let $X$ be a set. An \textit{involutory virtual birack} structure on $X$ 
consists of 
three binary operations $\otr,\utr,\circledast:X\times X\to X$ and a bijection 
$\pi:X\to X$ called the \textit{kink map} satisfying 
\begin{itemize}
\item[(i)] $\pi(x)\otr x=x\utr \pi(x)$, $\pi(x\otr x)=x\utr x$ 
$\pi(x)\utr x=x\otr \pi(x)$ and $\pi(x\utr x)=x\otr x$ for all 
$x\in X$,
\item[(ii)] The maps $\alpha_y,\beta_y,v_y:X\to X$ defined by 
$\alpha_y(x)=x\otr y$, $\beta_y(x)=x\utr y$ and $v_y(x)=x\circledast y$ are 
involutions satisfying 
\[
\alpha_{y}(x)=\alpha_{\beta_x(y)}(x), \quad
\beta_{y}(x)=\beta_{\alpha_x(y)}(x), \quad \mathrm{and}\quad
v_{y}(x)=v_{v_x(y)}(x),
\]
and the maps $S:X\times X\to X\times X$ and 
$V:X\times X\to X\times X$ defined by $S(x,y)=(y\otr x,x\utr y)$ and 
$V(x,y)=(y\circledast x,x\circledast y)$ are bijections,
\item[(iii)] The \textit{exchange laws} are satisfied:
\[\begin{array}{rclc}
(x\otr y)\otr (z\otr y) & = & (x\otr z)\otr (y\utr z) & (iii.i) \\
(x\otr y)\utr (z\otr y) & = & (x\utr z)\otr (y\utr z) & (iii.ii) \\
(x\utr y)\utr (z\utr y) & = & (x\utr z)\utr (y\otr z) & (iii.iii) \\
(x\circledast y)\circledast (z\circledast y) 
& = & (x\circledast z)\circledast(y\circledast z) & (iii.iv) \\
(x\otr y)\circledast (z\circledast y) 
& = & (x\circledast z)\otr (y\circledast z) & (iii.v) \\
(x\circledast y)\circledast (z\otr y) 
& = & (x\circledast z)\circledast (y\utr z) & (iii.vi) \\
(x\utr y)\circledast (z\circledast y) 
& = & (x\circledast z)\utr (y\circledast z) & (iii.vii) \\
\end{array}\]
\end{itemize}
The exponent of $\pi$ in the symmetric group on $X$, i.e. the minimal integer 
$N\ge 1$ such that $\pi^N=\mathrm{Id}:X\to X$, is the \textit{birack 
characteristic} or \textit{birack rank} of $X$. An involutory virtual birack of 
characteristic $N=1$ is a \textit{virtual bikei}. An involutory virtual birack 
in which $x\circledast y=x$ for all $x,y\in X$ is an \textit{involutory
birack}. 
\end{definition}

\begin{example}
Any group $G$ has an involutory virtual birack structure given by 
\[x\utr y=yx^{-1}y,\quad x\otr y=x,\quad x\circledast y = x, \quad\pi(x)=x.\]
Such an involutory virtual birack is known as a \textit{core quandle}. 
See \cite{J,M} for more.
\end{example}

\begin{example}
Let $A$ be any module over $\mathbb{Z}[t, s, v]/(1-t^2,1-s^2,1-s+t-st,1-v^2)$
and define
\[x\utr y=tx+(1-st)y,\quad x\otr y= sx, \quad x\circledast y=vx, 
\quad \pi(x)=x\]
Then $A$ is a \textit{virtual Alexander bikei}. See \cite{AN, KR,KM} for more.
\end{example}

\begin{example}
Let $X$ be any set with commuting involutions $\sigma,\tau,\nu:X\to X$
and define
\[x\utr y=\sigma(x),\quad x\otr y= \tau(x), \quad x\circledast y=\nu(x), \quad 
\pi(x)=\tau^{-1}\sigma x.\]
Then $A$ is a \textit{constant action involutory virtual birack}. 
\end{example}

More generally, we can conveniently express an involutory virtual birack 
structure on a 
finite set $X=\{x_1,\dots, x_n\}$ with a matrix of three $n\times n$ blocks
encoding the operation tables of $\utr,\otr$ and $\circledast$, i.e. the 
$n\times 3n$ matrix whose $(i,j)$ entry is $x_k$ where
\[x_k=\left\{
\begin{array}{ll}
x_i\utr x_j & 1\le j\le n \\
x_i\otr x_j & n+1\le j\le 2n \\
x_i\circledast x_j & 2n+1\le 3n.
\end{array}
\right.\]
\begin{example} 
The constant action involutory virtual birack on $X=\{x_1,x_2,x_3,x_4\}=
\{1,2,3,4\}$ with
$\sigma=(12)$, $\tau=(12)(34)$ and $\nu=(34)$ has matrix
\[M=\left[\begin{array}{rrrr|rrrr|rrrr}
2 & 2 & 2 & 2 & 2 & 2 & 2 & 2 & 1 & 1 & 1 & 1\\
1 & 1 & 1 & 1 & 1 & 1 & 1 & 1 & 2 & 2 & 2 & 2 \\
3 & 3 & 3 & 3 & 4 & 4 & 4 & 4 & 4 & 4 & 4 & 4\\
4 & 4 & 4 & 4 & 3 & 3 & 3 & 3 & 3 & 3 & 3 & 3 \\
\end{array}\right].\]
\end{example}

The involutory virtual birack axioms come from the Reidemeister moves with 
semiarcs labeled according to the rules
\[\includegraphics{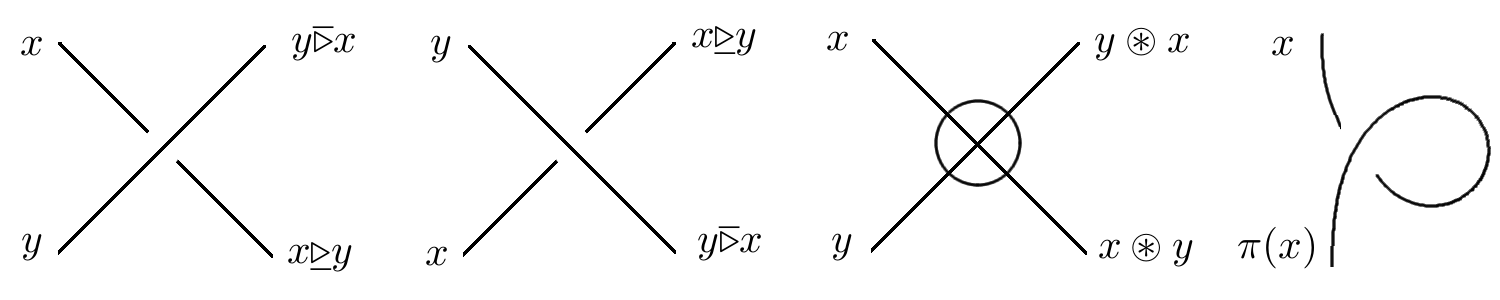}.\]
The involutory virtual birack axioms are chosen such that for every valid 
involutory birack labeling 
of a framed virtual knot before a virtual Reidemeister move, there is a unique 
corresponding labeling after the move.
The single-strand moves I and vI impose the requirements in axiom (i) 
(we depict one of the two classical cases): 
\[\includegraphics{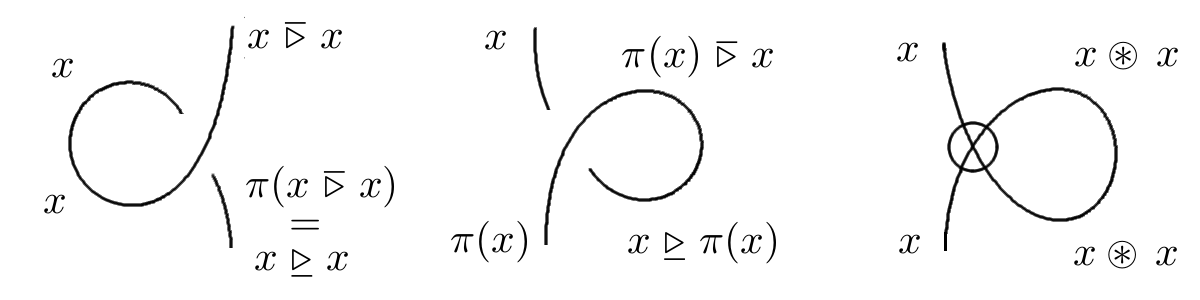}.\]
The kink map $\pi$ represents passing through a positive kink; its 
inverse $\pi^{-1}$ represents passing through a negative kink. If we define maps
$f,g:X\to X$ by $f(x)=x\otr x$ and $g(x)=x\utr x$, then we can take the first
equation above as a definition for $\pi$, namely set $\pi(x)=g(f^{-1}(x))$.

The two-strand moves II and vII require that at negative crossings the 
operations are switched and the virtual operation is the same on the top as 
on the bottom:
\[\includegraphics{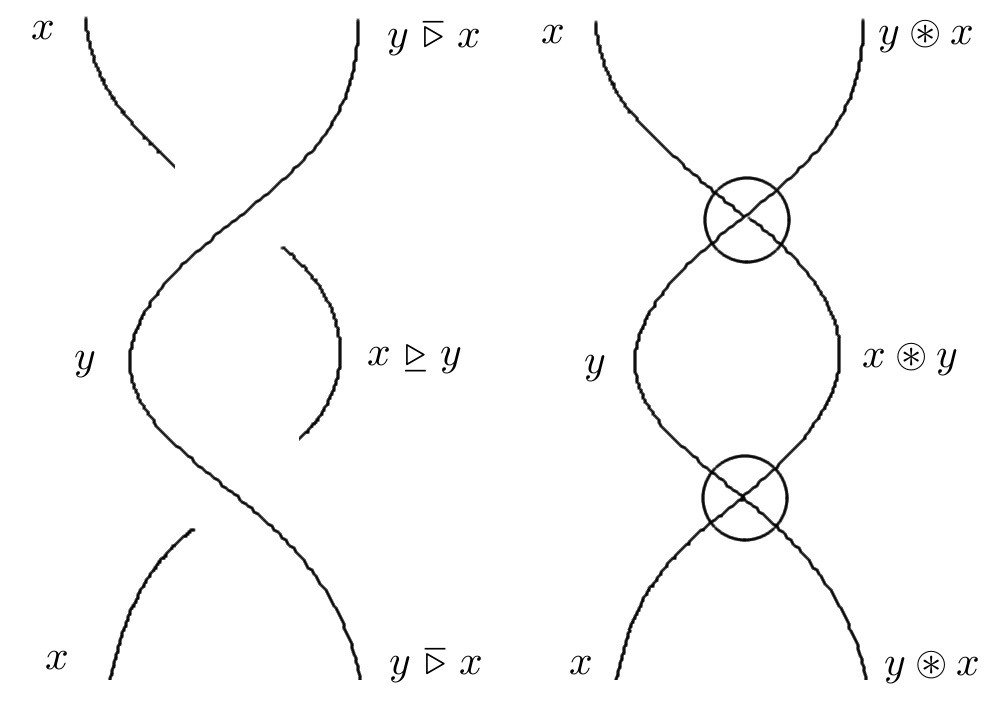}\]
The invertibility requirements are equivalent to the \textit{adjacent pairs
rule}, which says that at any crossing any pair of adjacent labels determines 
the other two labels. In particular, we need the operations to be 
right-invertible to make the labels on the right side of boundary of the 
neighborhood of the moves generic, i.e., since any pair of adjacent labels
can be considered to be ``the inputs'', we need $x\ast y$ to be an arbitrary 
element of $X$ for each $\ast\in\{\utr,\otr,\circledast\}$, which implies that 
operations are right-invertible.

The three-strand moves yield the exchange laws. We illustrate with the 
classical type III move; moves v and vIII are similar.
\[\includegraphics{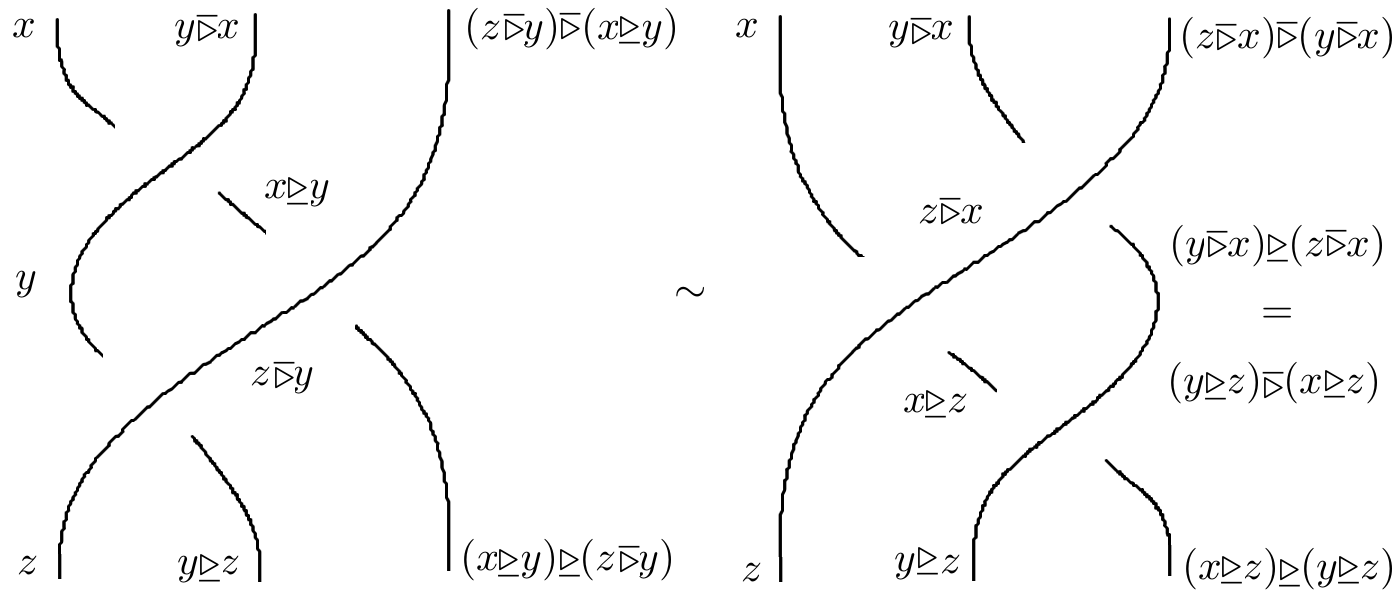}\]

By construction, we have the following standard result:

\begin{theorem}
If $X$ is an involutory virtual birack and $L$ and $L'$ are two unoriented
virtual link diagrams 
related by framed virtual Reidemeister moves, there is a one-to-one 
correspondence between the sets of $X$-labelings of $L$ and $L'$.
\end{theorem}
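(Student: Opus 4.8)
The plan is to reduce to the case of a single move and then verify each move type by showing that any $X$-labeling of the diagram outside the disk where the move takes place extends in a canonical one-to-one fashion across the move. If $L$ and $L'$ are related by a finite sequence of framed virtual Reidemeister moves, the desired bijection will be the composite of the bijections attached to the individual moves, so it suffices to treat one move at a time. For a single move, $L$ and $L'$ coincide outside a disk $D$, and every $X$-labeling of either diagram restricts to the same assignment of elements of $X$ to the finitely many semiarcs meeting $\partial D$; it therefore suffices to produce, for each fixed such boundary assignment, a bijection between the valid labelings of $L\cap D$ and of $L'\cap D$ that are compatible with it — and in fact each such set will turn out to be a singleton, so the bijection is forced. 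Since $X$ is involutory, the labeling condition at a classical or virtual crossing does not depend on a choice of local orientation (this is exactly what the identities $\alpha_y(x)=\alpha_{\beta_x(y)}(x)$, $\beta_y(x)=\beta_{\alpha_x(y)}(x)$, $v_y(x)=v_{v_x(y)}(x)$ together with $\alpha_y^2=\beta_y^2=v_y^2=\mathrm{Id}$ guarantee), so we may orient $L$ and $L'$ to agree outside $D$ and carry out the check using the oriented crossing rules pictured above.

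First I would dispose of the single-strand moves I and vI. Here $D$ meets the diagram in a single boundary semiarc with some label $x\in X$, and axiom (i) — in the form $\pi(x\otr x)=x\utr x$ and $\pi(x)\otr x=x\utr\pi(x)$, with $\pi=g\circ f^{-1}$ for $f(x)=x\otr x$ and $g(x)=x\utr x$ — is precisely the assertion that the internal semiarc labels are forced and consistent on both sides. Next, for the two-strand moves II and vII, two semiarcs with labels $x,y$ enter $D$; the rule that the operations are interchanged at negative crossings, combined with the facts that $\alpha_y,\beta_y,v_y$ are involutions and that $S(x,y)=(y\otr x,\,x\utr y)$ and $V(x,y)=(y\c x,\,x\c y)$ are bijections, forces the internal labels and makes the correspondence for the move run in one direction inverse to the correspondence for the move run in the other. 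This is also where right-invertibility of the operations (the adjacent pairs rule) is needed, to ensure the boundary data on $\partial D$ can be taken generic. Finally, for the three-strand moves, the exchange laws (iii) do the work: (iii.i)--(iii.iii) handle the classical type III move, the purely virtual exchange law handles the virtual three-strand move, and the mixed exchange laws in which $\c$ interacts with $\otr$ and $\utr$ handle the moves in which a classical strand slides past a virtual crossing. In each of these one computes the three labels emerging from $D$ in terms of the three entering and checks that the two sides of the move agree exactly by the correspondingly labeled identity.

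The main obstacle will not be any single computation — each case is a direct consequence of the axiom bearing its name — but rather the bookkeeping needed to confirm that the involutory structure genuinely eliminates the dependence on strand orientation and on crossing sign, so that the finitely many move pictures above really do exhaust all cases of the unoriented framed virtual Reidemeister moves. Concretely, one must check that reversing the orientation of a single semiarc through a classical crossing converts a constraint $c=a\otr b$ into an equivalent one (this is the content of $\alpha_y$ being an involution together with its tilde-identity), and similarly for $\utr$ and for $\c$; only after this is it legitimate to orient the diagrams as in the previous paragraph and appeal to the oriented move pictures. Once this reduction is in place, the theorem follows by construction, as the text has already indicated: the virtual birack axioms were chosen exactly so that each labeling before a move corresponds to a unique labeling after it.
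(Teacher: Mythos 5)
Your proposal is correct and follows essentially the same route as the paper, which states the theorem as holding ``by construction'' after deriving each axiom from its corresponding move: axiom (i) from the single-strand moves, the involution/bijectivity conditions and operation-switching at negative crossings from the two-strand moves, and the exchange laws from the three-strand moves. Your write-up simply makes explicit the move-by-move bookkeeping (including the reduction of the unoriented statement to the oriented pictures via the involutory conditions) that the paper leaves implicit.
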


\begin{definition}
Let $X$ and $Y$ be involutory virtual biracks. A map $f:X\to Y$ is
a \textit{homomorphism} of involutory virtual biracks if we have
\[f(x\ast y)=f(x)\ast f(y)\]
for all $x,y\in X$ and $\ast\in\{\utr,\otr,\circledast\}$.
\end{definition}

The set of $X$-labelings of a virtual knot or link diagram can be identified 
with the set $\mathrm{Hom}(\mathcal{IVB}(L),X)$ of involutory virtual birack 
homomorphisms from the \textit{fundamental involutory virtual birack} of the
knot or link $L$, defined combinatorially as the set of equivalence classes of
virtual birack words is a set of generators corresponding to the semiarcs of 
$L$ modulo the equivalence relation generated by the involutory virtual birack
axioms and crossing relations in $L$, to $X$.

Involutory virtual biracks generalize \textit{virtual bikei}, i.e. involutory
virtual biracks with $\pi=\mathrm{Id}:X\to X$, first introduced in \cite{KM}. 
The definitions and results in this section are straightforward 
generalizations, with new notation, of results from \cite{AN,N-BR} etc.
See \cite{FJK,KR,KM} for more about oriented biracks and virtual biracks.

\section{Symmetric Virtual Biracks}\label{SVB}

We begin this section with a generalization of a definition from \cite{KO}.

\begin{definition}\label{def:good}
Let $X$ be an involutory virtual birack. An involution $\rho:X\to X$ is a 
\textit{good involution} if for all $x,y\in X$ we have
\[\rho(x) \ast y=\rho(x\ast y) \quad \mathrm{and}\quad
x\ast \rho(y)=x\ast y\]
where $\ast\in \{\utr, \otr,\circledast\}$.
An involutory virtual birack with a choice of good involution is a 
\textit{symmetric virtual birack}.
\end{definition}

We visualize elements of a symmetric virtual birack as semiarcs labels 
consisting of arrows normal to the semiarc, with $\rho(x)$ pointing in the
opposite direction of $x$. 
\[\includegraphics{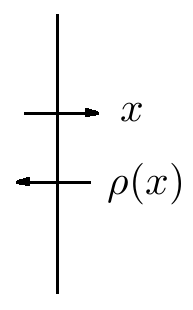}\]
We will refer to such a labeling of a link diagram $L$ with normal vectors
labeled with elements of $X$ as an \textit{arrow labeling} of $L$ by
$(X,\rho)$.

The good involution conditions then arise by picturing $x\ast y$ as 
sliding the arrow represented by $x$ through the crossing, maintaining 
normality with the strand, in the direction of $y$. 
\[\includegraphics{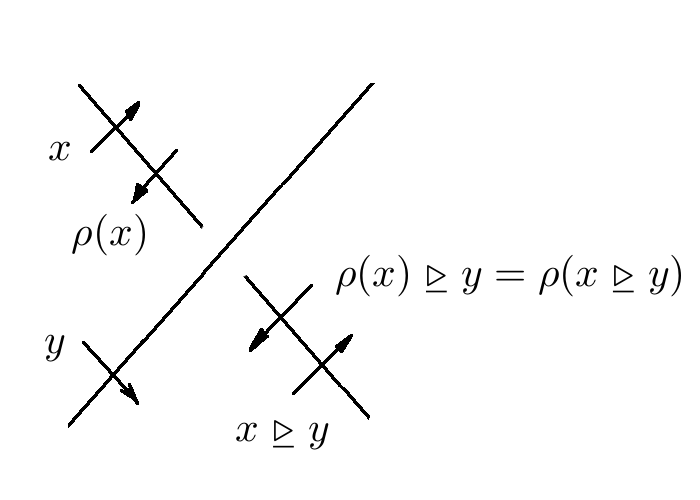}\quad
\quad \includegraphics{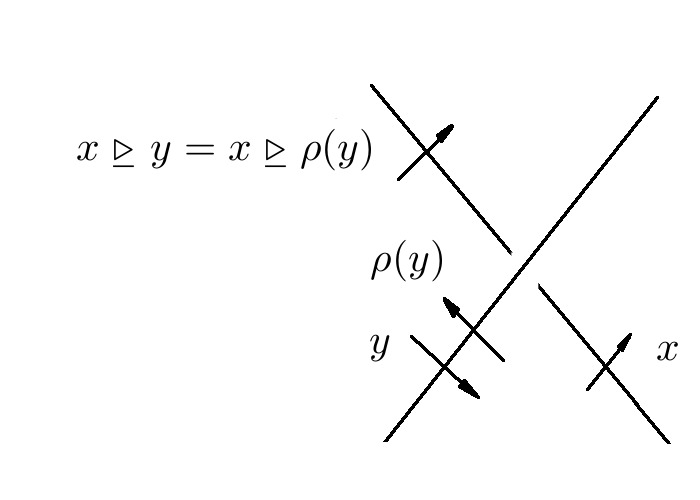}
\]

Given an involutory virtual birack, we can ask which involutions 
$\rho:X\to X$ are good.

\begin{lemma}\label{lem:pi}
Let $X$ be an involutory virtual birack. Then the kink map $\pi:X\to X$ 
satisfies the conditions
\[
\begin{array}{rclrclr}
y\otr x & = & y\otr \pi(x), & 
\pi(x\utr y) & = & \pi(x)\utr y & (+)\\
y\utr x & = & y\utr \pi(x), & 
\pi(x\otr y) & = & \pi(x)\otr y & (-)\\
y\circledast x & = & y\circledast \pi(x), & 
\pi(x\circledast y) & = & \pi(x)\circledast y & (\mathrm{v}).
\end{array}
\]
\end{lemma}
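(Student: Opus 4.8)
The plan is to derive each of the six identities directly from the involutory virtual birack axioms, exploiting the fact that $\pi$ is defined by $\pi = g \circ f^{-1}$ where $f(x) = x\otr x$ and $g(x) = x\utr x$. The six claims split naturally into three ``geometric'' identities on the left (the statements $y\ast x = y\ast\pi(x)$) and three ``naturality'' identities on the right (the statements $\pi(x\ast' y) = \pi(x)\ast' y$ for the complementary operation), corresponding to the three strand types.

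First I would establish the left-column identities. Consider $(+)$: we want $y\otr x = y\otr\pi(x)$. Writing $x = f(w)$ so that $\pi(x) = g(w) = w\utr w$ and $x = w\otr w$, the claim becomes $y\otr(w\otr w) = y\otr(w\utr w)$. I would attack this using axiom (ii), specifically $\alpha_z(u) = \alpha_{\beta_u(z)}(u)$, i.e. $u\otr z = u\otr(z\utr u)$, together with the exchange laws (iii.i)--(iii.iii) applied to the nested expressions $w\otr w$ and $w\utr w$; the key is that axiom (i) relates $w\otr w$ and $w\utr w$ through $\pi$, and the operator identities in (ii) let one ``swap'' which of these appears as the second argument of $\otr$. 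The analogous arguments handle $(-)$ (using $\beta$ and exchange laws ii--iii) and $(\mathrm{v})$ (using $v_z(u) = v_{v_u(z)}(u)$ together with the mixed exchange laws (iii.v), (iii.vi), (iii.vii) relating $\circledast$ to $\otr$ and $\utr$). In each case the birack-rank hypothesis is not needed; only the involutory structure and the exchange laws are used.

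For the right-column (naturality) identities, I would argue that $\pi$ commutes with each operation in the appropriate sense by showing both $f$ and $g$ individually have the right naturality, and then composing. Concretely, for $(+)$ we want $\pi(x\utr y) = \pi(x)\utr y$; since $\pi = g\circ f^{-1}$ it suffices to show $f(x\utr y)$ and $g$ interact correctly — more cleanly, one shows $(x\utr y)\otr(x\utr y) = ?$ and $(x\utr y)\utr(x\utr y) = ?$ can each be rewritten via the exchange laws so that the map $\pi$ factors through. Here the exchange law (iii.ii), $(x\otr y)\utr(z\otr y) = (x\utr z)\otr(y\utr z)$, and (iii.iii) are the workhorses, together with the involutivity of $\alpha_y,\beta_y$ to cancel repeated second arguments. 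The $(-)$ and $(\mathrm{v})$ naturality statements are handled symmetrically using the mirror exchange laws and (iii.iv)--(iii.vii) respectively.

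The main obstacle I anticipate is bookkeeping in the naturality identities: unwinding $\pi$ as $g\circ f^{-1}$ forces one to verify the identity after applying $f$ to both sides (to avoid the inverse), which turns a one-line claim into a chain of several exchange-law rewrites where the nesting of $\otr$, $\utr$, $\circledast$ must be tracked carefully. A cleaner route, which I would try first, is to observe that the six identities are exactly the algebraic shadow of the Reidemeister I / vI moves performed adjacent to a II, III, or vIII move — i.e. they say that sliding a kink past a crossing of each type is consistent — and then cite Theorem~1 (the labeling correspondence) applied to the relevant small diagrams to get the identities for free. If the purely diagrammatic argument can be made rigorous this avoids the calculation entirely; otherwise one falls back on the exchange-law manipulation sketched above, treating the $(+)$, $(-)$, and $(\mathrm{v})$ rows completely in parallel.
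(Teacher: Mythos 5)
Your preferred ``cleaner route'' --- sliding a kink through a crossing of each type and reading off the forced label identities from the labeling correspondence under Reidemeister moves --- is exactly the paper's proof, which simply draws the positive-crossing case and declares the negative and virtual cases similar. The algebraic fallback you sketch is viable for the left-column identities (setting $y=z=w$ in exchange law (iii.i) gives $(x\otr w)\otr(w\otr w)=(x\otr w)\otr(w\utr w)$, i.e.\ $u\otr x=u\otr\pi(x)$ by surjectivity of $\alpha_w$ and of $f$, and similarly for the other rows), but you never actually execute it for the right-column naturality identities, which is where the nontrivial bookkeeping lies; since the diagrammatic argument you lead with is the paper's own and suffices, this omission is not fatal.
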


\begin{proof}
This is most easily verified using diagrams. We depict the positive crossing
case below; the negative and virtual crossing cases are similar.
\[\includegraphics{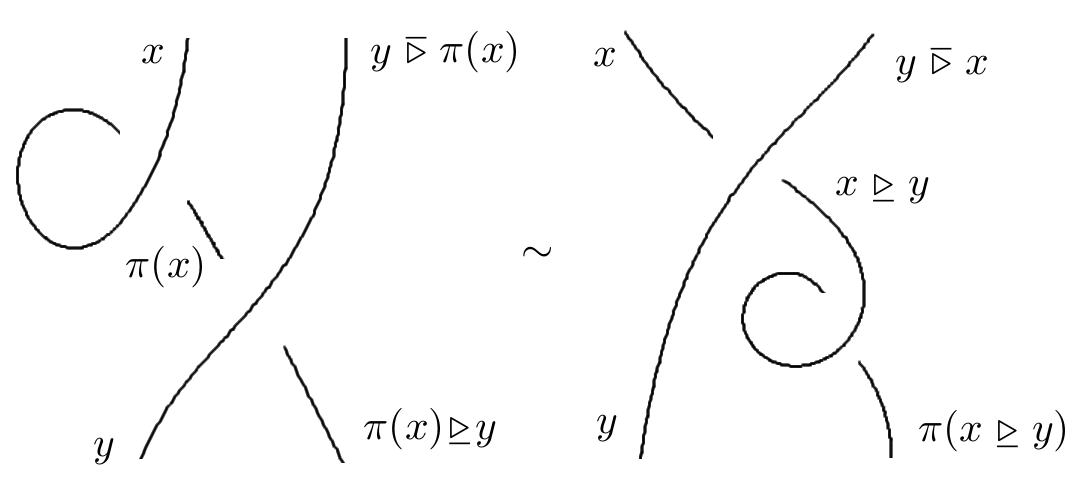}\]
\end{proof}

\begin{corollary}
Let $X$ be an involutory virtual birack. Then the kink map 
$\pi:X\to X$ is a good involution. (See also proposition 3.4 from \cite{KO}.)
\end{corollary}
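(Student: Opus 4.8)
The plan is to check the two requirements of Definition \ref{def:good} for the map $\rho=\pi$: that $\pi$ is an involution, and that it satisfies the good involution identities for every $\ast\in\{\utr,\otr,\circledast\}$. The second requirement is essentially free from Lemma \ref{lem:pi}: the right-hand column there reads $\pi(x\ast y)=\pi(x)\ast y$, which is exactly the condition $\rho(x\ast y)=\rho(x)\ast y$, and the left-hand column, e.g. $y\otr x=y\otr\pi(x)$, becomes after interchanging the two variables $x\ast\pi(y)=x\ast y$, which is exactly $x\ast\rho(y)=x\ast y$. So the only real content beyond Lemma \ref{lem:pi} is showing $\pi^2=\mathrm{Id}$, and I expect that to be the main obstacle.

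To that end, following the discussion preceding the theorem I would set $f(x)=x\otr x$ and $g(x)=x\utr x$, so that by definition $\pi=g\circ f^{-1}$, and first prove that $f$ and $g$ are themselves involutions. Specializing the identity $\alpha_y(x)=\alpha_{\beta_x(y)}(x)$ of axiom (ii) to $y=x$ gives $x\otr x=x\otr(x\utr x)$, i.e. $f(x)=\alpha_{g(x)}(x)$; since $\alpha_{g(x)}$ is an involution, applying it again yields $f(x)\otr g(x)=x$. On the other hand, exchange law (iii.i) with all three variables equal to $x$ gives $f(x)\otr f(x)=f(x)\otr g(x)$, and the left side is $f(f(x))$, so $f^2=\mathrm{Id}$. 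The symmetric argument using $\beta_y(x)=\beta_{\alpha_x(y)}(x)$ together with exchange law (iii.iii) gives $g^2=\mathrm{Id}$. Hence $f^{-1}=f$, $g^{-1}=g$, and $\pi=g\circ f$, $\pi^{-1}=f\circ g$.

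The second step is to compute $\pi$ and $\pi^{-1}$ explicitly and observe they coincide. For $\pi^{-1}=f\circ g$: by the definition of $f$, $\pi^{-1}(x)=f(g(x))=g(x)\otr g(x)=\alpha_{g(x)}(g(x))$, and rewriting this with $\alpha_y(w)=\alpha_{\beta_w(y)}(w)$ taken at $y=w=g(x)$, together with $\beta_{g(x)}(g(x))=g(g(x))=x$, gives $\pi^{-1}(x)=\alpha_x(g(x))=g(x)\otr x$. For $\pi$: axiom (i) gives $\pi(x\otr x)=x\utr x$, i.e. $\pi(f(x))=g(x)$, while Lemma \ref{lem:pi} gives $\pi(x\otr x)=\pi(x)\otr x$; hence $\pi(x)\otr x=g(x)$, and applying the involution $\alpha_x$ gives $\pi(x)=\alpha_x(g(x))=g(x)\otr x$. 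Therefore $\pi(x)=\pi^{-1}(x)$ for all $x\in X$, so $\pi^2=\mathrm{Id}$ and $\pi$ is an involution; combined with the identities of Lemma \ref{lem:pi} matched to Definition \ref{def:good} as in the first paragraph, $\pi$ is a good involution. The only nonroutine part is this derivation—establishing $f^2=g^2=\mathrm{Id}$ and then pinning down the common formula $\pi(x)=g(x)\otr x=\pi^{-1}(x)$; everything else is a matter of lining up Lemma \ref{lem:pi} with the definition.
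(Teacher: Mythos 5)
Your proposal is correct, and it actually supplies more than the paper does: the paper states this corollary with no proof at all, treating it as an immediate consequence of Lemma \ref{lem:pi}. Your matching of Lemma \ref{lem:pi} to Definition \ref{def:good} (right column gives $\pi(x\ast y)=\pi(x)\ast y$, left column after swapping variables gives $x\ast\pi(y)=x\ast y$) is exactly the intended content of the corollary. What the paper never addresses is why $\pi$ is an \emph{involution} in an involutory virtual birack --- the definition only asserts that $\pi$ is a bijection with some finite order $N$, and the operations $\alpha_y,\beta_y,v_y$ being involutions does not obviously force $N\le 2$. Your derivation closes that gap cleanly: specializing $\alpha_y(x)=\alpha_{\beta_x(y)}(x)$ at $y=x$ gives $f(x)=x\otr g(x)$, hence $f(x)\otr g(x)=x$ by involutivity of $\alpha_{g(x)}$, and exchange law (iii.i) at $x=y=z$ then yields $f^2=\mathrm{Id}$; the symmetric argument with (iii.iii) gives $g^2=\mathrm{Id}$; and the two computations $\pi^{-1}(x)=f(g(x))=g(x)\otr x$ and $\pi(x)=\alpha_x(g(x))=g(x)\otr x$ (the latter combining axiom (i) with Lemma \ref{lem:pi}) show $\pi=\pi^{-1}$. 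Each step checks out against the stated axioms. So your argument is consistent with the paper's (implicit) approach but is genuinely more complete; the only stylistic remark is that the involutivity of $\pi$ is a fact worth isolating as its own lemma, since the paper relies on it elsewhere (e.g.\ in calling $\pi$ a good involution at all) without ever proving it.
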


The following observations will be useful in the next section.

\begin{lemma}\label{lem:hom}
The kink map $\pi:X\to X$ and any good involution $\rho:X\to X$ are 
involutory virtual birack homomorphisms.
\end{lemma}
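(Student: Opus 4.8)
The plan is to verify the homomorphism identity $f(x\ast y)=f(x)\ast f(y)$ for $\ast\in\{\utr,\otr,\circledast\}$ in the two cases $f=\rho$, an arbitrary good involution, and $f=\pi$, the kink map. In each case this is purely a matter of chaining together identities we already have in hand — the two conditions in Definition \ref{def:good} for $\rho$, and the two halves of each line of Lemma \ref{lem:pi} for $\pi$ — so no pictures are needed.

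For a good involution $\rho$, fix $x,y\in X$ and $\ast\in\{\utr,\otr,\circledast\}$. The first condition of Definition \ref{def:good} gives $\rho(x\ast y)=\rho(x)\ast y$, and the second condition of Definition \ref{def:good}, applied with $\rho(x)$ in place of $x$, gives $\rho(x)\ast y=\rho(x)\ast\rho(y)$. Stringing these together yields $\rho(x\ast y)=\rho(x)\ast\rho(y)$, which is exactly what is required. For the kink map $\pi$ I would run the same kind of argument, now using Lemma \ref{lem:pi}. For $\utr$: line $(+)$ gives $\pi(x\utr y)=\pi(x)\utr y$, and the first identity of line $(-)$, read as $b\utr a=b\utr\pi(a)$ and applied with $b=\pi(x)$ and $a=y$, gives $\pi(x)\utr y=\pi(x)\utr\pi(y)$, so $\pi(x\utr y)=\pi(x)\utr\pi(y)$. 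The $\otr$ case is the mirror image, using the second identity of line $(-)$ followed by the first identity of line $(+)$; the $\circledast$ case uses both halves of line $(\mathrm{v})$ in the same way. This shows $\pi$ is a homomorphism as well.

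The argument is entirely formal, so there is no genuine obstacle; the only thing to watch is the bookkeeping — keeping straight which half of which line of Lemma \ref{lem:pi} is being invoked at each step (the "absorb $\pi$ on the second argument" identities $y\ast x=y\ast\pi(x)$ versus the "push $\pi$ through to the first argument" identities $\pi(x\ast y)=\pi(x)\ast y$), together with the analogous split of the two good involution conditions. I note also that the definition of homomorphism here constrains only the three operations $\utr,\otr,\circledast$ and says nothing about the kink map, so nothing further need be checked.
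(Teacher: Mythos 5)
Your proof is correct and follows essentially the same route as the paper: for each operation it chains the ``push $f$ through to the first argument'' identity with the ``absorb $f$ on the second argument'' identity, citing Definition \ref{def:good} for $\rho$ and Lemma \ref{lem:pi} for $\pi$. The only difference is that you spell out the per-operation bookkeeping that the paper leaves implicit.
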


\begin{proof}
Let $X$ be an involutory virtual birack.
For any $x,y\in X$, any $\ast\in\{\utr,\otr,\circledast\}$ 
and any good involution $\rho:X\to X$, we have
\[\pi(x\ast y) = \pi(x)\ast y=\pi(x)\ast\pi(y)\]
by Lemma \ref{lem:pi} and
\[\rho(x\ast y) = \rho(x)\ast y=\rho(x)\ast\rho(y)\]
by Definition \ref{def:good}.
\end{proof}

\begin{corollary}\label{cor:strand}
Let $X$ be an involutory virtual birack and 
$\rho:X\to X$ a good involution. Any involutory virtual birack expression
$w$ with leftmost operand $\rho(x)$ is equal to $\rho(w')$ where $w'$
is the involutory birack expression obtained from $w$ by replacing the 
leftmost operand $\rho(x)$ with $x$.
\end{corollary}

\begin{proof} 
First, consider the special cases $w=\rho(x)\ast y$ and $w=\rho(x)\ast \rho(y)$
for $\ast\in\{\utr,\otr,\circledast\}$.
By Lemma \ref{lem:hom} we have $\rho(x)\ast \rho(y)=\rho(x\ast y)$, and
by Definition \ref{def:good} we have
\[\rho(x)\ast y=\rho(x)\ast\rho(y)=\rho(x\ast y).\]
Recursively applying these two cases yields the result in the general case.
\end{proof}

\begin{example}
For instance, consider the expression 
$((\rho(x)\otr y)\utr z)\utr(x\circledast y)$. We have
\begin{eqnarray*}
((\rho(x)\otr y)\utr z)\utr(x\circledast y) 
& = & ((\rho(x)\otr \rho(y))\utr z)\utr(x\circledast y) \\
& = & ((\rho(x\otr y)\utr z)\utr(x\circledast y) \\
& = & ((\rho(x\otr y)\utr \rho(z))\utr(x\circledast y) \\
& = & (\rho((x\otr y)\utr z)\utr(x\circledast y) \\
& = & (\rho((x\otr y)\utr z)\utr\rho((x\circledast y)) \\
& = & \rho(((x\otr y)\utr z)\utr(x\circledast y)).
\end{eqnarray*}
\end{example}

\begin{lemma}\label{lem:com}
Let $X$ be an involutory virtual birack and $\rho:X\to X$ a good involution. 
Then $\pi\rho=\rho\pi$.
\end{lemma}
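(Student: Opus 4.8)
The plan is to exploit the explicit description of the kink map as $\pi=g\circ f^{-1}$, where $f,g:X\to X$ are the bijections $f(x)=x\otr x$ and $g(x)=x\utr x$ introduced in Section \ref{V} (recall that $\pi$ may in fact be \emph{defined} by $\pi(x)=g(f^{-1}(x))$, so $f$ is invertible). Since $\rho$ is a good involution it is an involutory virtual birack homomorphism by Lemma \ref{lem:hom}, and in addition the second good-involution identity $x\ast\rho(y)=x\ast y$ lets us push $\rho$ past a right-hand operand. The key step is to combine these to show that $\rho$ commutes with both $f$ and $g$.

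Concretely, for any $x\in X$ one computes
\[
\rho(f(x))=\rho(x\otr x)=\rho(x)\otr x=\rho(x)\otr\rho(x)=f(\rho(x)),
\]
where the second equality uses $\rho(x\ast y)=\rho(x)\ast y$ and the third uses $x\ast\rho(y)=x\ast y$ read in reverse; the identical computation with $\otr$ replaced by $\utr$ gives $\rho\circ g=g\circ\rho$. From $f\circ\rho=\rho\circ f$ and the invertibility of $f$, conjugating yields $f^{-1}\circ\rho=\rho\circ f^{-1}$. Assembling the pieces,
\[
\pi\circ\rho=g\circ f^{-1}\circ\rho=g\circ\rho\circ f^{-1}=\rho\circ g\circ f^{-1}=\rho\circ\pi,
\]
which is the assertion.

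I do not expect a genuine obstacle here; the computation is short. The only points requiring a little care are (a) that $\pi=g\circ f^{-1}$ is legitimate, i.e. $f$ is a bijection, which is precisely the remark made when $\pi$ is introduced, and (b) that the collapse $\rho(x)\otr x=\rho(x)\otr\rho(x)$ genuinely uses the \emph{second} good-involution identity and not merely the homomorphism property of $\rho$. As an alternative one could argue diagrammatically, drawing a positive kink on an arrow-labeled semiarc and observing that reversing the normal arrow (applying $\rho$) before or after sliding it through the kink gives the same label; but the algebraic argument above is cleaner and self-contained.
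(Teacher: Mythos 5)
Your proof is correct and follows essentially the same route as the paper: both decompose $\pi$ as $g\circ f^{-1}$, show $\rho$ commutes with $f$ and $g$ via the good-involution identities (the paper cites its Lemma \ref{lem:hom} where you unpack the two identities explicitly), and then conjugate to get commutation with $f^{-1}$. The paper also notes the same diagrammatic alternative you mention, so there is nothing to add.
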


\begin{proof}
Recall that $\pi(x)=g(f^{-1}(x))$ where $f(x)=x\otr x$ and $g(x)=x\utr x$.
Then we have
\[\rho(g(x))=\rho(x\utr x)=\rho(x)\utr \rho(x)=g(\rho(x))\]
and similarly $\rho$ commutes with $f$, and hence with $f^{-1}.$ Then
we have 
\[\pi(\rho(x))
=g(f^{-1}(\rho(x))
=g(\rho(f^{-1}(x))
=\rho (g(f^{-1}(x))
=\rho(\pi(x))
\]
as required. We can also see this directly from the picture:
\[\includegraphics{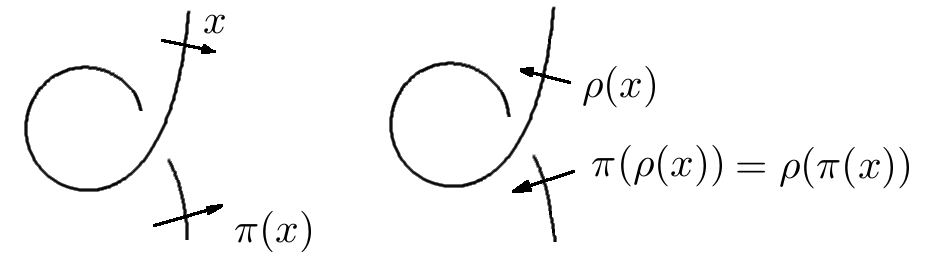}\]
\end{proof}

\section{Symmetric Enhancements}\label{SE}

Let us briefly recall the definition of the involutory birack counting 
invariant from \cite{N-BR}. Let $X$ be a finite involutory virtual birack. 
Then $\pi$ can be identified
with an element of the finite symmetric group $S_{|X|}$ and hence has finite 
order. The minimal integer $N\ge 1$ such that $\pi^N=\mathrm{Id}_X$ is
the \textit{characteristic} of $X$. Then if two framed virtual knot or link
diagrams $L$ and $L'$ are related by the framed Reidemeister moves and the 
\textit{$N$-phone cord move}
\[\includegraphics{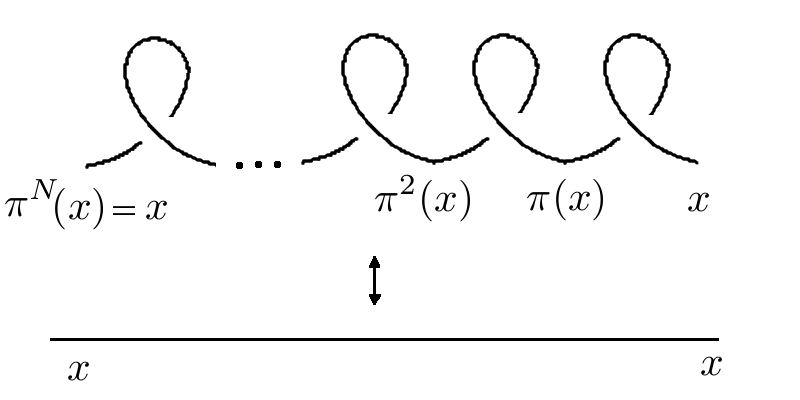}\]
then $X$-labelings of $L$ and $L'$ are in one-to-one correspondence. 
If $L$ is a virtual link of $c$ components, then the set of framings of
$c$ is in one-to-one correspondence with $\mathbb{Z}^c$, with a vector
$(w_1,\dots, w_c)\in \mathbb{Z}^c$ corresponding to a diagram of $L$
in which the $k$th component crosses itself $w_k$ times  counted 
algebraically, i.e., counting positive crossings with a $+1$ and negative
crossings with a $-1$.

Then the number of $X$-labelings of diagrams of $L$ with writhe vectors 
$\vec{w}$ and $\vec{v}$ are the same if $\vec{w}\equiv\vec{v} \ 
\mathrm{mod\ } N$. In particular, the $\mathbb{Z}^c$ lattice of $X$-labeling 
numbers is tiled by a tile which can be identified with $\mathbb{Z}_N^c$.
Then the sum over one tile of these framing numbers is an invariant of 
unframed virtual links known as the \textit{integral involutory virtual 
birack counting invariant}, denoted  
\[\Phi_X^{\mathbb{Z}}(L)=\sum_{\vec{w}\in\mathbb{Z}_N^c} |\mathcal{L}(L_{\vec{w}},X)|\]
where $L_{\vec{w}}$ is a $L$ with framing vector $\vec{w}$ and 
$\mathcal{L}(L_{\vec{w}},X)$ is the set of $X$-labelings of $L_{\vec{w}}$.

Now, suppose $X$ is an involutory virtual birack with good involution $\rho$. 
Given an arrow-labeling of a link diagram $L$, the good involution
conditions are chosen so that switching the direction of an arrow and 
applying $\rho$ to the label results in a valid arrow-labeling. 
In particular, the same $X$-labeling can
be represented by many different $(X,\rho)$ arrow labelings.

Say that two
$X$-labelings of a link $L$ are \textit{$\rho$-equivalent} if one is obtained
from the other by applying $\rho$ to a subset of the semiarc labels. That is,
two labelings are $\rho$-equivalent if for every semiarc labeled $x$ in one 
labeling, the corresponding semiarc in the other labeling is either labeled 
$x$ or $\rho(x)$. This equivalence relation partitions the sets of labelings
into disjoint subsets; we denote the quotient sets by 
$\mathcal{L}(L_{\vec{w}},X)/\rho$. 

\begin{proposition}\label{p7}
Let $X$ be an involutory virtual birack of characteristic $N$ with good
involution $\rho$. If two $X$-labelings of a link diagram are $\rho$-equivalent
before a Reidemeister or $N$-phone cord move, they are $\rho$-equivalent
after the move.
\end{proposition}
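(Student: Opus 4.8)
The plan is to reduce the statement to an algebraic fact about involutory virtual birack expressions — a strengthening of Corollary \ref{cor:strand} — that captures the geometric idea that a $\rho$-equivalence class of $X$-labelings records an arrow labeling read up to reversing individual normal arrows. Recall that for each framed virtual Reidemeister move, and for the $N$-phone cord move, the induced one-to-one correspondence $\Psi$ between the $X$-labelings of the two diagrams fixes every semiarc lying outside the disk $D$ in which the move takes place, while the label carried by each semiarc meeting or lying inside $D$ is a fixed involutory virtual birack expression — built from $\otr,\utr,\circledast$, together with the derived kink map $\pi^{\pm1}$ near curls — in the labels on the semiarcs crossing $\partial D$, the ``input'' labels.

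The key step is the following strengthening of Corollary \ref{cor:strand}. Let $w$ be an involutory virtual birack expression, whose leftmost operand is the generator reached by always descending into the left argument, and for a set $S$ of generators occurring in $w$ let $w_S$ be the expression obtained from $w$ by replacing every occurrence of every generator in $S$ by its $\rho$-image. Then
\[
w_S=
\begin{cases}
w & \text{if the leftmost operand of }w\text{ does not lie in }S,\\
\rho(w) & \text{otherwise};
\end{cases}
\]
that is, applying $\rho$ to operands other than the leftmost changes nothing, and applying it to the leftmost one as well replaces $w$ by $\rho(w)$. I would establish this by induction on the structure of $w$: the case $w=x$ is trivial, and for $w=u\ast v$ with $\ast\in\{\utr,\otr,\circledast\}$ the inductive hypothesis applied to $u$ and to $v$ makes $w_S$ equal to one of $u\ast v$, $\rho(u)\ast v$, $u\ast\rho(v)$, $\rho(u)\ast\rho(v)$, which then collapse via $u\ast\rho(v)=u\ast v$ and $\rho(u)\ast v=\rho(u\ast v)$ from Definition \ref{def:good} and $\rho(u)\ast\rho(v)=\rho(u\ast v)$ from Lemma \ref{lem:hom}; the remaining case $w=\pi^{\pm1}(u)$ is handled using Lemma \ref{lem:com}.

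Granting this, the proposition follows. Let $f$ and $g$ be $\rho$-equivalent $X$-labelings of the diagram before the move, let $S$ be the set of semiarcs on which $g=\rho\circ f$, and set $f'=\Psi(f)$ and $g'=\Psi(g)$. On a semiarc outside $D$ the labels $f'$ and $g'$ coincide with $f$ and $g$, hence are equal or differ by $\rho$. For a semiarc $s$ meeting or lying inside $D$, write $f'(s)=w_s\bigl(f(y_1),\dots,f(y_m)\bigr)$, where $y_1,\dots,y_m$ are the input semiarcs and $w_s$ is the associated expression; then $g'(s)$ arises from this by replacing $f(y_i)$ with $\rho(f(y_i))$ exactly for those $y_i$ lying in $S$, so by the strengthened corollary $g'(s)=f'(s)$ when the leftmost operand of $w_s$ is an input semiarc outside $S$, and $g'(s)=\rho(f'(s))$ otherwise. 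In every case $g'(s)\in\{f'(s),\rho(f'(s))\}$, so $f'$ and $g'$ are $\rho$-equivalent.

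I expect the main obstacle to be the inductive step of the strengthened corollary, and within it the point that applying $\rho$ to a non-leftmost operand changes nothing even when that operand is a compound subexpression $v$: there one must first invoke Corollary \ref{cor:strand} to pass from $v$ to $\rho(v)$ before using $u\ast\rho(v)=u\ast v$. A secondary point needing care is that $\pi^{\pm1}$ is not one of the three primitive operations, so its occurrences in the semiarc expressions near curls must be folded into the induction — this is exactly where Lemma \ref{lem:com} enters. Everything else is routine bookkeeping.
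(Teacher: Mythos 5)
Your proof is correct and follows essentially the same route as the paper's: both rest on the observation that every label inside the move disk is a birack expression in the boundary labels, so that Corollary \ref{cor:strand} together with the good-involution identities of Definition \ref{def:good}, Lemma \ref{lem:hom}, and Lemma \ref{lem:com} (for the $\pi^{\pm1}$ appearing in the type I and $N$-phone cord moves) forces each new label to be either unchanged or replaced by its $\rho$-image. The only difference is presentational: where the paper makes the ``downstream'' observation and then verifies the move types case by case with diagrams, you package that observation as a single structural induction on expressions, which handles all the moves uniformly.
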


\begin{proof}
First, note that we are only considering valid $X$-labelings of diagrams; 
in particular, randomly applying $\rho$ to labels in a validly $X$-labeled
diagram need not result in a validly $X$-labeled diagram (as opposed to 
$(X,\rho)$ arrow-labeled diagram). 

The key observation is that the operator equivalence of $y\in X$ and 
$\rho(y)\in X$ implies that replacing a label $x$ with $\rho(x)$
in a validly $X$-labeled diagram only affects semiarc labels ``downstream'',
i.e., semiarcs on the same strand as $x$, and by corollary 
\ref{cor:strand} these labels are all modified by applying $\rho$. 

The type I and $N$-phone cord cases follow from lemma \ref{lem:com}.
There are three cases for the type II move:
\[\includegraphics{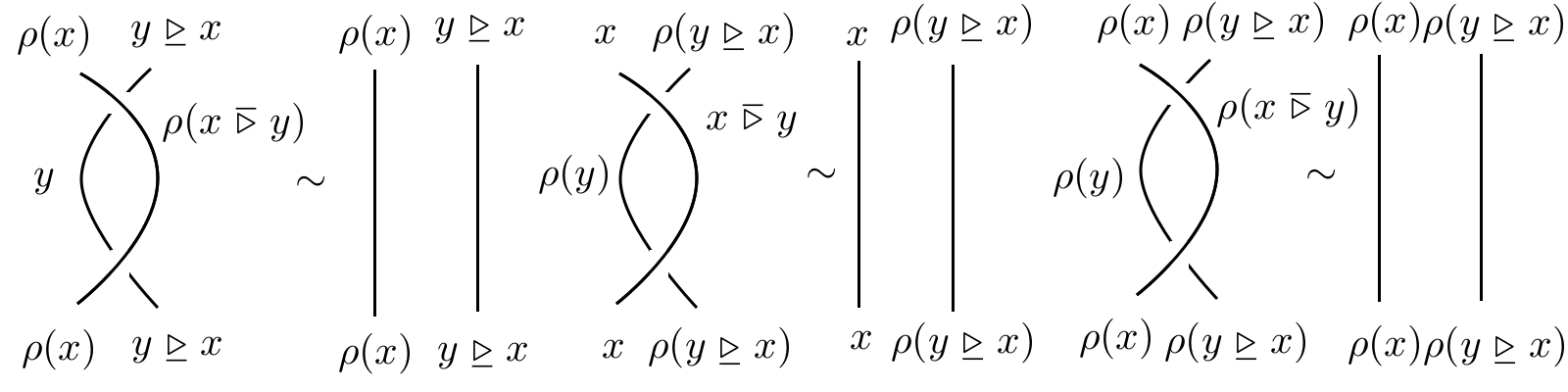}\]

For the remaining moves (III, v and vIII), we note that applying $\rho$
to one label on the edge of a move applies $\rho$ to every label on the 
same strand, so the labeling before and after the move are $\rho$-equivalent.
We illustrate one case, the others are similar.
\[\includegraphics{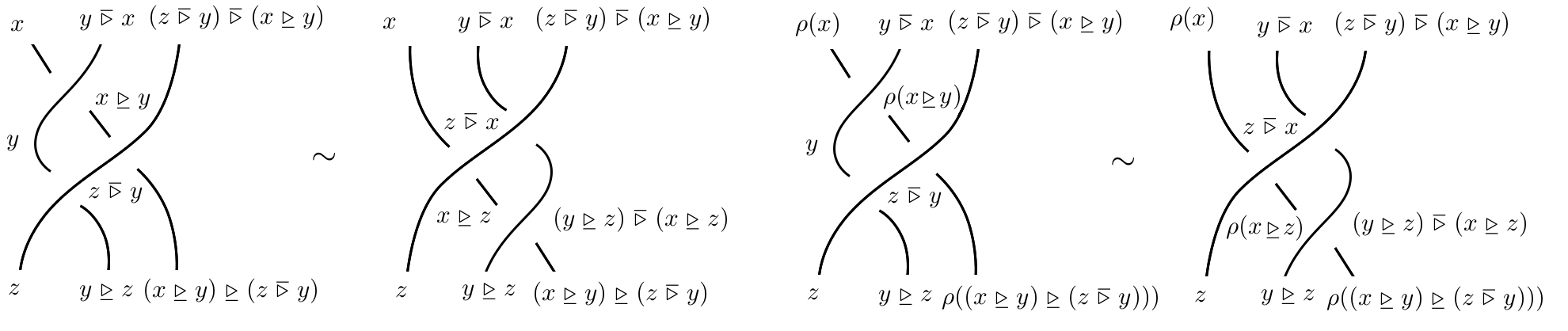}\]
\end{proof}

We can then use this extra information to enhance the counting invariant.

\begin{definition}
Let $X$ be an involutory virtual birack with good involution $\rho$. Then the 
\textit{symmetric enhancement} of the involutory virtual birack counting 
invariant is 
\[\Phi_X^{\rho}(L)=\sum_{\vec{w}\in\mathbb{Z}_N^c} 
\left(\sum_{x\in\mathcal{L}(L_{\vec{w}},X)/\rho}  u^{|x|}\right).\]
\end{definition}

As a consequence of proposition \ref{p7}, we obtain:

\begin{theorem}
Let $X$ be a finite involutory virtual birack with good involution $\rho$.
Then $\Phi_X^{\rho}(L)$ is an invariant of unframed unoriented virtual links.
\end{theorem}


If $\rho$ has no fixed points, then for every $X$-labeling of a diagram $L$
there is exactly one other $\rho$-equivalent $X$-labeling, obtained by applying
$\rho$ to every label; in this case, the enhanced invariant is equivalent
to the unenhanced invariant with
\[\Phi_X^{\rho}(L)=\frac{1}{2}\Phi_X^{\mathbb{Z}}(L) u^2.\] 
Similarly, if $\rho=\mathrm{Id}_X$ is the identity map on $X$ then we have
\[\Phi_X^{\rho}(L)=\Phi_X^{\mathbb{Z}}(L) u.\] 
If $\rho\ne \mathrm{Id}_X$ has fixed points, however, these equivalence classes 
can have various sizes and the enhanced invariant can contain more information
about $L$ than the unenhanced invariant.

\begin{example}
Let $L$ be the pictured \textit{virtual Hopf link} and let $X$ be the
involutory virtual birack with matrix
\[\left[\begin{array}{rrr|rrr|rrr}
1 & 1 & 1 & 1 & 1 & 1 & 1 & 1 & 1 \\
2 & 3 & 3 & 3 & 2 & 2 & 3 & 3 & 3 \\
3 & 2 & 2 & 2 & 3 & 3 & 2 & 2 & 2
\end{array}\right]\]
This involutory virtual birack has kink map $\pi=(23)$ and hence 
characteristic $N=2$, 
and $\rho=(23)$ is a good involution. Then there are sixteen total 
$X$-labelings of framings of $L$ over a tile of framing vectors mod 2.
In framing $\vec{w}=(0,0)$ we get contribution $u+u^2$
\[\includegraphics{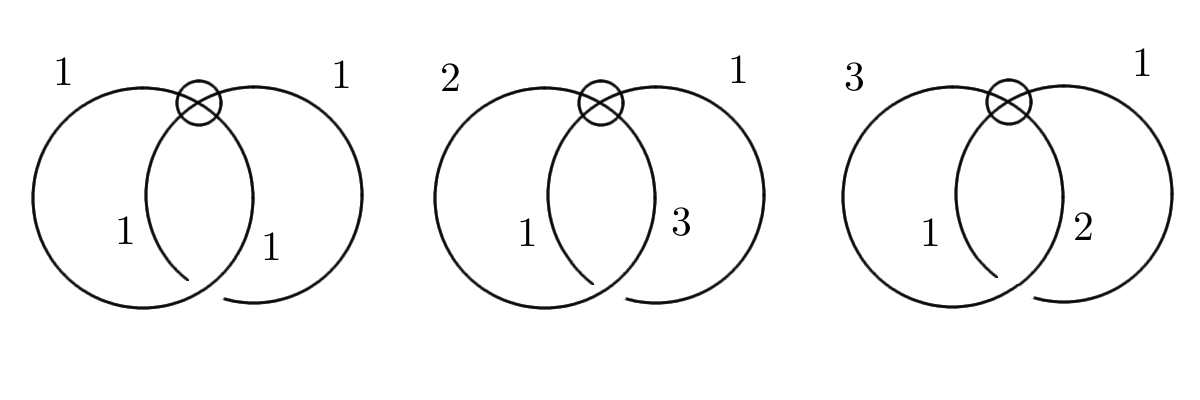},\]
in framing $\vec{w}=(1,0)$ we get contribution $u+u^4$ 
\[\includegraphics{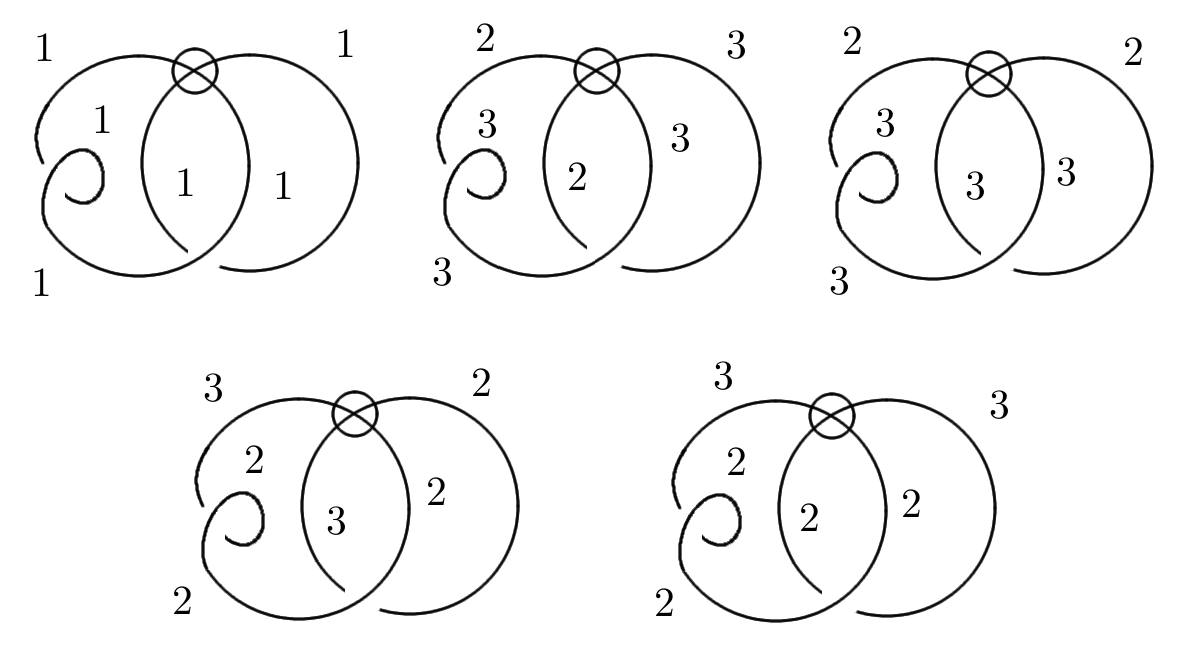},\]
in framing $\vec{w}=(0,1)$ we get contribution $u+2u^2$
\[\includegraphics{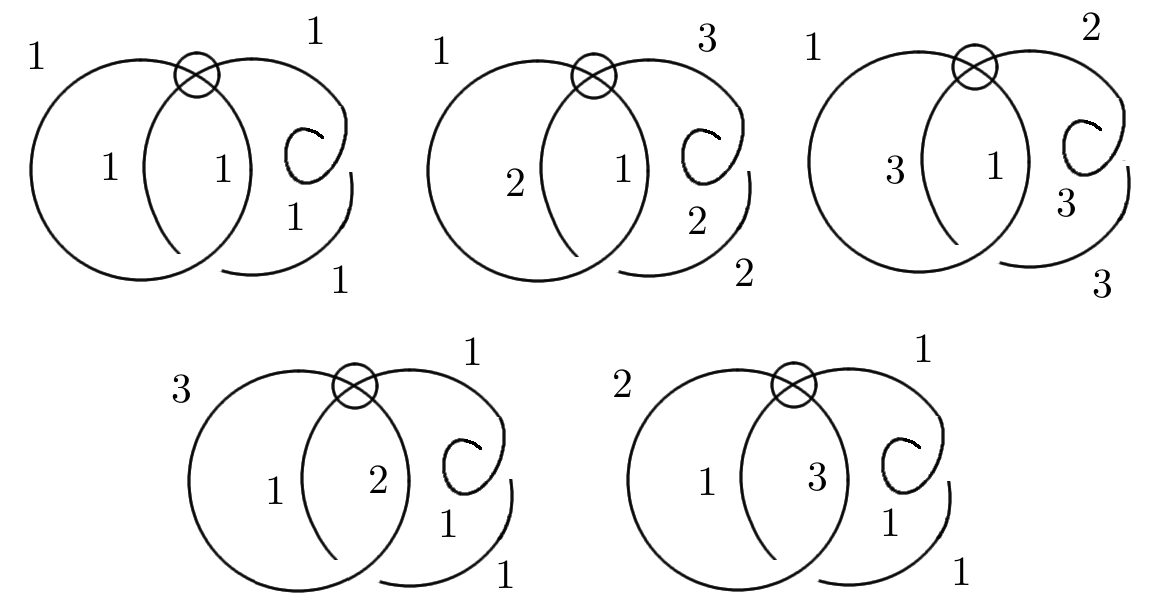},\]
and in framing $\vec{w}=(1,1)$ we get contribution $u+u^2$
\[\includegraphics{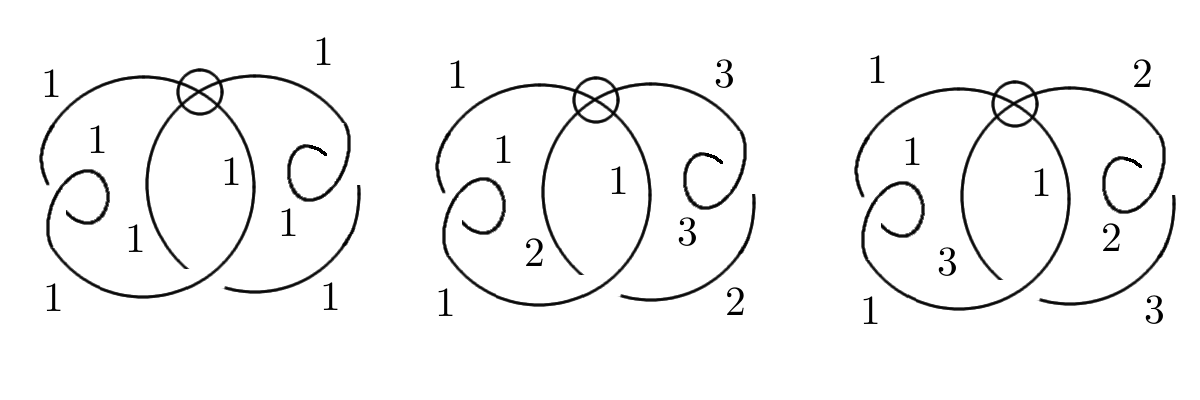}\]
Hence, the invariant value is $4u+4u^2+u^4$. 
\end{example}

Our next example demonstrates that the symmetric enhancement is a proper
enhancement, i.e., that $\Phi_X^{\mathbb{\rho}}(L)$ determines 
$\Phi_X^{\mathbb{Z}}(L)$ but is not determined by $\Phi_X^{\mathbb{Z}}(L)$.

\begin{example}
Let $X$ be the involutory virtual birack $X$ of characteristic $N=2$ with matrix
\[\left[\begin{array}{rrrr|rrrr|rrrr}
2 & 2 & 1 & 1 & 2 & 2 & 1 & 1 & 1 & 1 & 1 & 1 \\
1 & 1 & 2 & 2 & 1 & 1 & 2 & 2 & 2 & 2 & 2 & 2 \\
3 & 4 & 3 & 3 & 3 & 4 & 4 & 4 & 3 & 3 & 4 & 4 \\
4 & 3 & 4 & 4 & 4 & 3 & 3 & 3 & 4 & 4 & 3 & 3 \\
\end{array}\right]\]
and let $\rho:X\to X$ be the permutation $\rho=(34)$. (In fact, $\rho=\pi$).
Then our \texttt{python} computations reveal that the two virtual links
$L$ and $L'$ both have counting invariant value 
$\Phi_X^{\mathbb{Z}}(L)=\Phi_X^{\mathbb{Z}}(L')=120$, the links
are distinguished by their $\Phi_X^{\rho}$-values:
\[\begin{array}{cc}
\includegraphics{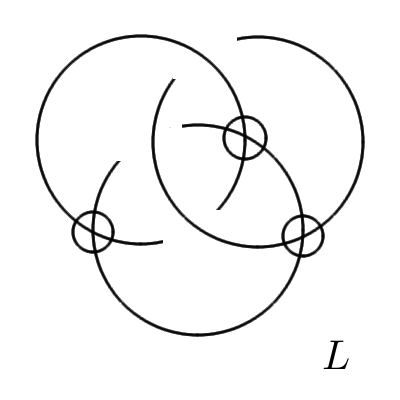} & \includegraphics{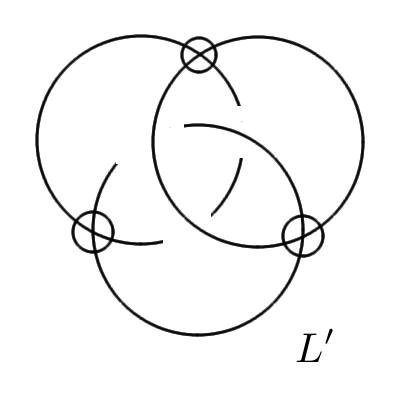} \\
\Phi_X^{\rho}(L)=u^8+12u^4+32u^2 & \Phi_X^{\rho}(L')=u^8+12u^4+64u \\
\end{array}\] 
\end{example}

\section{Questions For Future Work}\label{Q}

We close with a few questions and directions for future research.

\begin{itemize}
\item What conditions on non-involutory biracks are sufficient for
the existence of good involutions?
\item What should it mean for a map $\rho:X\to X$ with $\rho^3=\mathrm{Id}_X$
to be good?
\item In \cite{KO}, symmetric quandle cocycles are used to enhance the
quandle counting invariant. What other enhancements can be defined for 
the birack counting invariant using good involutions? 
\item What enhancements can be defined for $\Phi_X^{\rho}$?
\end{itemize}

\bibliography{mh-sn}{}

\begin{thebibliography}{10}

\bibitem{AN}
S.~Aksoy and S.~Nelson.
\newblock Bikei, involutory biracks and unoriented link invariants.
\newblock {\em J. Knot Theory Ramifications}, 21(6):1250045, 13, 2012.

\bibitem{CJKLS}
J.~S. Carter, D.~Jelsovsky, S.~Kamada, L.~Langford, and M.~Saito.
\newblock Quandle cohomology and state-sum invariants of knotted curves and
  surfaces.
\newblock {\em Trans. Amer. Math. Soc.}, 355(10):3947--3989, 2003.

\bibitem{CKS}
J.~S. Carter, S.~Kamada, and M.~Saito.
\newblock Stable equivalence of knots on surfaces and virtual knot cobordisms.
\newblock {\em J. Knot Theory Ramifications}, 11(3):311--322, 2002.
\newblock Knots 2000 Korea, Vol. 1 (Yongpyong).

\bibitem{FJK}
R.~Fenn, M.~Jordan-Santana, and L.~Kauffman.
\newblock Biquandles and virtual links.
\newblock {\em Topology Appl.}, 145(1-3):157--175, 2004.

\bibitem{FRS0}
R.~Fenn, C.~Rourke, and B.~Sanderson.
\newblock An introduction to species and the rack space.
\newblock In {\em Topics in knot theory ({E}rzurum, 1992)}, volume 399 of {\em
  NATO Adv. Sci. Inst. Ser. C Math. Phys. Sci.}, pages 33--55. Kluwer Acad.
  Publ., Dordrecht, 1993.

\bibitem{J}
D.~Joyce.
\newblock A classifying invariant of knots, the knot quandle.
\newblock {\em J. Pure Appl. Algebra}, 23(1):37--65, 1982.

\bibitem{KK}
N.~Kamada and S.~Kamada.
\newblock Abstract link diagrams and virtual knots.
\newblock {\em J. Knot Theory Ramifications}, 9(1):93--106, 2000.

\bibitem{KO}
S.~Kamada and K.~Oshiro.
\newblock Homology groups of symmetric quandles and cocycle invariants of links
  and surface-links.
\newblock {\em Trans. Amer. Math. Soc.}, 362(10):5501--5527, 2010.

\bibitem{K}
L.~H. Kauffman.
\newblock Virtual knot theory.
\newblock {\em European J. Combin.}, 20(7):663--690, 1999.

\bibitem{KM}
L.~H. Kauffman and V.~O. Manturov.
\newblock Virtual biquandles.
\newblock {\em Fund. Math.}, 188:103--146, 2005.

\bibitem{KR}
L.~H. Kauffman and D.~Radford.
\newblock Bi-oriented quantum algebras, and a generalized {A}lexander
  polynomial for virtual links.
\newblock In {\em Diagrammatic morphisms and applications ({S}an {F}rancisco,
  {CA}, 2000)}, volume 318 of {\em Contemp. Math.}, pages 113--140. Amer. Math.
  Soc., Providence, RI, 2003.

\bibitem{M}
S.~V. Matveev.
\newblock Distributive groupoids in knot theory.
\newblock {\em Mat. Sb. (N.S.)}, 119(161)(1):78--88, 160, 1982.

\bibitem{N-BR}
S.~Nelson.
\newblock Link invariants from finite biracks.
\newblock In {\em Knots in {P}oland. {III}. {P}art 1}, volume 100 of {\em
  Banach Center Publ.}, pages 197--212. Polish Acad. Sci. Inst. Math., Warsaw,
  2014.

\bibitem{NW}
S.~Nelson and E.~Watterberg.
\newblock Birack dynamical cocycles and homomorphism invariants.
\newblock {\em J. Algebra Appl.}, 12(8):1350049, 14, 2013.

\end{thebibliography}
\bibliographystyle{abbrv}

\bigskip

\noindent\textsc{Department of Mathematical Sciences\\
Claremont McKenna College \\
850 Columbia Ave. \\
Claremont, CA 91767
}

\end{document}